\newtheorem{definition}{Definition}
\newtheorem{remark}{Remark}
\newtheorem{theorem}{Theorem}
\newtheorem{lemma}{Lemma}
\newtheorem{question}{Question}
\newtheorem{corollary}{Corollary}
\newcommand{\cU}{\mathcal{U}}
\newcommand{\id}{\mathrm{id}}
\title{Time-varying Spaces and Mobile Sensor Networks}
\author{Tia Karkos}
\email{tia.karkos@colostate.edu}
\author{Henry Adams}
\email{henry.adams@ufl.edu}
\date{\today}
\begin{document}

\begin{abstract}
Consider a mobile sensor network, in which each sensor covers a ball.
Sensors do not know their locations, but can detect if the covered balls overlap.
An intruder cannot pass undetected between overlapping sensors.
An evasion path exists if it is possible for an intruder to move in the domain without ever entering a covered region.
We examine two time-varying topological spaces arising from such mobile sensor networks.
These examples were constructed by Adams and Carlsson~\cite{EvasionPaths} to show that the time-varying homotopy type of the covered region does not determine whether an evasion path exists or not.
One of these spaces has an evasion path and the other does not, which means that the uncovered regions are not time-varying homotopy equivalent.
We show that the covered regions of these spaces are not time-varying homeomorphic, even though they are time-varying homotopy equivalent.
We then elaborate on this time-varying homotopy equivalence between the covered regions.
\end{abstract}

\keywords{Sensor networks, evasion paths, fiberwise homotopy equivalence, \v{C}ech complexes}

\maketitle

\section{Introduction}

Let $D\subseteq \mathbb{R}^d$ be homeomorphic to a closed $n$-dimensional ball.
Suppose we have a collection of (smaller) ball-shaped sensors in $D$ that cover the boundary $\partial D$; see Figure~\ref{fig:sensor-balls}.
The sensors do not know their exact locations (as measured by GPS for example), but two sensors can detect if they overlap.
Using only this local connectivity data, de Silva and Ghrist~\cite{Coordinate-free,de2007coverage} show how to compute homology in order to decide if the sensors cover all of the domain $D$ or not.
This is an example of a \emph{minimal sensing} algorithm, since each sensor measures only a small amount of local connectivity data.
Nevertheless, we can ``integrate'' these local measurements together using homology to answer a global coverage question.

\begin{figure}[htb]
\centering
\includegraphics[width=2.5in]{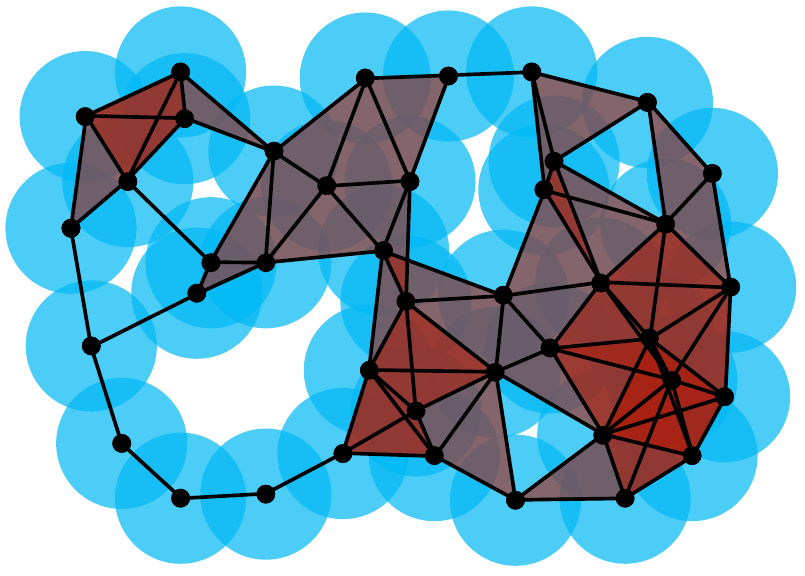}
\caption{Sensor balls in a planar domain.}
\label{fig:sensor-balls}
\end{figure}

In~\cite{Coordinate-free}, de Silva and Ghrist furthermore consider the setting of mobile sensors.
Following their lead, we assume that there is a fixed collection of sensors covering the boundary $\partial D$ of the domain.
All other sensors are allowed to wander continuously in $D$.
Again, the sensors do not know their exact time-varying locations, but two sensors can detect when they overlap.
Using only this local connectivity data, can we determine if the sensors necessarily detect any continuously moving mobile intruder?
That is, using only this local connectivity data, can we determine if there exists an \emph{evasion path} avoiding the sensors or not?
Interestingly, this question about mobile sensors is more subtle than the setting of static sensors.
Indeed, Adams and Carlsson~\cite{EvasionPaths} show that the time-varying connectivity data alone does not determine whether an evasion path exists or not.
They construct two sensor networks with the same time-varying connectivity data, and whose covered regions are time-varying homotopy equivalent, in which one sensor network has an evasion path while the other does not.
In other words, the time-varying homotopy type of a sensor network's covered region does not determine if an evasion path exists or not; how that covered region is embedded in spacetime also matters.

The purpose of this paper is three-fold.
First, we prove that the two sensor network examples constructed by Adams and Carlsson have covered regions that are \emph{not} time-varying homeomorphic, even though they are time-varying homotopy equivalent.
Second, we give a detailed explanation of the time-varying homotopy equivalence claimed by Adams and Carlsson.
Third, we ask the following question.
In any example of two sensor networks with covered regions that are time-varying homotopy equivalent and uncovered regions that are not time-varying homotopy equivalent (for example if one has an evasion path and the other does not), is it necessarily the case that the covered regions are not time-varying homeomorphic?

We begin in Section~\ref{sec:related} by surveying related work, before covering preliminaries and mathematical background in Section~\ref{sec:preliminaries}.
We remind the reader of notions of equivalence (homeomorphism and homotopy equivalence) between spaces in Section~\ref{sec:time-varying}, before then overviewing the generalizations of these notions to the setting of time-varying spaces.
In Section~\ref{sec:application} we apply these notions of time-varying equivalences between spaces to examples arising from mobile sensor networks.
We conclude with some open questions in Section~\ref{sec:conclusion}.

\section{Related work}
\label{sec:related}

The sensor network problem that we study was first introduced by de Silva and Ghrist.
In the setting of static sensors, de Silva and Ghrist show that connectivity data alone can determine if the entire domain is covered or not~\cite{Coordinate-free,de2007coverage}.
Indeed, for sensors in a ball-shaped domain in $d$-dimensional Euclidean space, one can use the connectivity data of the sensors to compute the $(d-1)$-dimensional homology of the covered region.
Since the boundary of this region is covered by sensors, there is a hole in the covered region if and only if the $(d-1)$-dimensional homology of the covered region is nonzero.

Furthermore, in~\cite{Coordinate-free} de Silva and Ghrist introduce a variant of this problem for mobile sensor networks.
The sensors are now moving continuously inside the domain, although there are fixed sensors covering the boundary.
One is interested if there is an evasion path or not, i.e., if it is possible for a continuously moving intruder to evade the sensors.
By using stacked Vietoris--Rips or stacked \v{C}ech complexes, de Silva and Ghrist provide a one-sided criterion (relying on relative homology) that, in certain situations, can guarantee that no evasion path can exist.
However, this is not an if-and-only if criterion.

Surprisingly, Adams and Carlsson~\cite {EvasionPaths,MyThesis} demonstrate that the time-varying \v{C}ech complex and the fiberwise homotopy type of the covered region of a mobile sensor network are insufficient to determine whether an evasion path exists (Figure~\ref{fig:spaces}).
This is a sharp departure from the case of static sensor networks.
Indeed, mobile coverage also depends on how the covered region of the mobile sensor network is embedded into spacetime.
Nevertheless, Adams and Carlsson use zigzag persistence to provide a one-sided criterion for mobile coverage that is no stronger than the prior condition by de Silva and Ghrist, but which can be computed in a streaming fashion.

Adams and Carlsson also show that for planar sensor networks, one can use weak rotation data (embedding data) and the time-varying alpha complex to determine if-and-only-if an evasion path exists.
Computing the alpha complex requires more geometric information than one would ideally be asked to measure in the context of minimal sensing.
This motivates the following question: Can the same weak rotation data be incorprated with the time-varying \v{C}ech complex (which is easier to measure than the alpha complex) to determine if-and-only-if an evasion path exists?
See the Open Question on Pages~29 and~109 of~\cite{MyThesis} and~\cite{EvasionPaths}, respectively.
This question remains open a decade later.

For other work on coverage problems in sensor networks, including some recent work on mobile sensors, we refer the reader to~\cite{adams2021efficient,carlsson2020space,cavanna17when,chintakunta2014distributed,gamble2015coordinate,Kerber2013,distributed}.

What we call \emph{time-varying spaces} are in other contexts often called \emph{fiberwise spaces}.
(In a fiberwise space, the base space no longer needs to be an interval of time, but could be an arbitrary topological space, and the map to the base space is often assumed to be a \emph{fibration}.
We will not need the concept of fibration maps in this paper.)
We refer the reader to~\cite{crabb2012fibrewise} for more information on fiberwise spaces.

\section{Preliminaries}
\label{sec:preliminaries}

This section contains preliminaries on the fundamental group, the \v{C}ech complex, the nerve lemma, and the sensor network problem.
We refer the reader to the books~\cite{armstrong2013basic,EdelsbrunnerHarer,Hatcher} for more details on topology.

\subsection{Fundamental Groups}

Given a topological space $X$ and a point $x_0$, the fundamental group $\pi_1 (X, x_0)$ is the set of all homotopy equivalent loops in $X$ that start and end at $x_0$.
If $\alpha$ is a loop in $X$, then we let $[\alpha]$ denote its equivalence class.
The fundamental group is a \emph{functor} from topological spaces to groups, which implies that if $f\colon X \to Y$ is a continuous map between two topological spaces, then there is an induced map $\pi_1(f) \colon \pi_1(X,x_0) \to \pi_1(Y,f(x_0))$ that is a group homomorphism.
We often simplify notation by omitting basepoints, writing $\pi_1(X)$ instead of $\pi_1(X,x_0)$.

\begin{figure}[htb]
\centering
\includegraphics[width=4in]{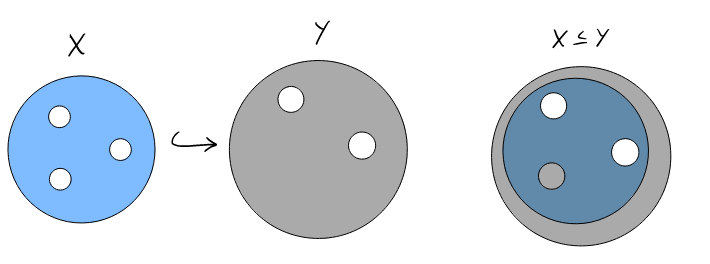}
\caption{Embedding of $X$ in $Y$.}
\label{fig:inclusion}
\end{figure}

\begin{figure}[htb]
\centering
\includegraphics[width=4in]{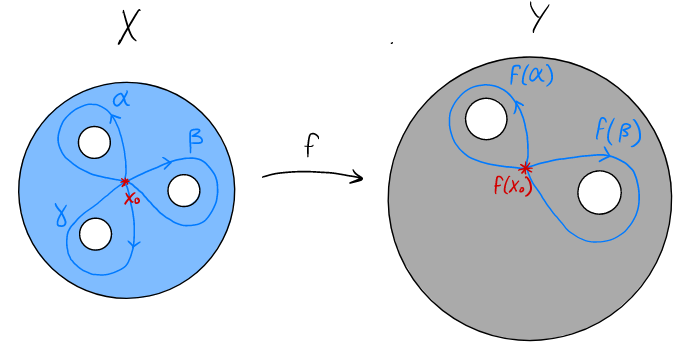}
\caption{Induced map on the fundamental group $\pi_1$.}
\label{fig:inclusionloops}
\end{figure}

For example, Figure~\ref{fig:inclusion} shows spaces $X$ and $Y$ and an embedding of $X$ in $Y$. In Figure~\ref{fig:inclusionloops}, we see that the induced map $f$ on the fundamental group $\pi_1$ sends the loops $\alpha$ and $\beta$ to the corresponding loops $f(\alpha)$ and $f(\beta)$ in $Y$.
But because the hole that the loop $\gamma$ surrounds gets filled in, $f(\gamma)$ can be contracted in $Y$.
So, $\pi_1(X)=\langle[\alpha],[\beta],[\gamma]\rangle$ is the free group on three generators (corresponding to the three holes in $X$), $\pi_1(Y)=\langle [f(\alpha)],[f(\beta)]\rangle$ is the free group on two generators, and the inclusion $f\colon X \hookrightarrow Y$ induces a map on the fundamental group that is generated by sending the homotopy class of $\alpha$ to the homotopy class of $f(\alpha)$, the homotopy class of $\beta$ to the homotopy class of $f(\beta)$, and the homotopy class of $\gamma$ to the identity element in $\pi_1(Y)$.

\subsection{The Nerve Lemma and \v{C}ech Complex}
Let $X$ be a topological space and let $\cU$ be a cover of $X$ via open sets.
The fact that $\cU$ is a \emph{cover} of $X$ means that for each $x\in X$, there exists some set $U\in\cU$ with $x\in U$.

\begin{definition}
\label{def: nerve simplicial complex}
The nerve simplicial complex $N(\cU)$ has a vertex for each set in $\cU$, and a $k$-simplex when the corresponding $k+1$ sets in $\cU$ have a non-empty $(k+1)$-fold intersection.
\end{definition}

\begin{figure}[htb]
\centering
\includegraphics[width=3in]{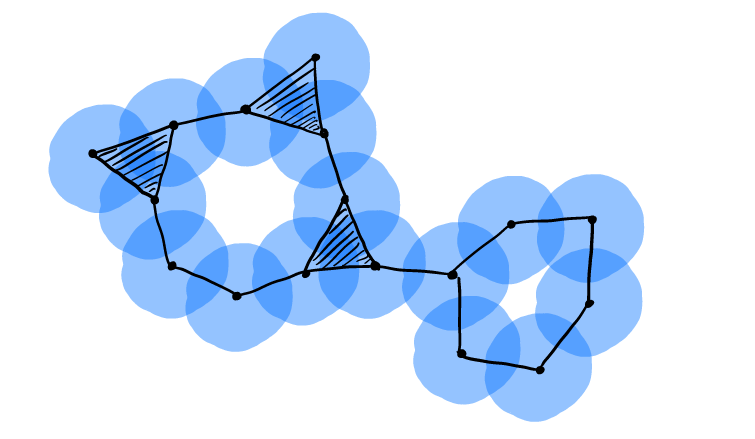}
\caption{Nerve simplicial complex for sensor balls.}
\label{fig:sensors}
\end{figure}

The nerve lemma is an important result which, under certain criteria, can guarantee that the nerve simplicial complex $N(\cU)$ is homotopy equivalent\footnote{
I.e., ``has the same shape as''; see Section~\ref{sssec:homotopy-equiv}.
}
to the space $X$.
See Figure~\ref{fig:sensors}.
We refer the reader to~\cite{Borsuk1948,EdelsbrunnerHarer,Hatcher} for more background on the nerve lemma.

\begin{theorem}[Nerve lemma]
\label{thm: nerve lemma}
Let $X$ be a paracompact topological space, and let $\cU$ be an open cover of $X$.
If the cover $\cU$ is \emph{good}, meaning that each set in $\cU$ is contractible and each intersection of sets in $\cU$ is either empty or contractible, then we have a homotopy equivalence $X \simeq N(\cU)$.
\end{theorem}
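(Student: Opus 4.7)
The plan is to construct a common auxiliary space---often called the Mayer--Vietoris blowup, or equivalently the homotopy colimit of the diagram of intersections---that projects onto both $X$ and $N(\cU)$, and then argue that both projections are homotopy equivalences. Concretely, for each non-empty simplex $\sigma=[U_{i_0},\ldots,U_{i_k}]$ of $N(\cU)$, I would set $U_\sigma = U_{i_0}\cap\cdots\cap U_{i_k}$ and let $|\sigma|$ denote the closed geometric $k$-simplex. The blowup is then
\[
B \;=\; \Big(\bigsqcup_{\sigma} U_\sigma \times |\sigma|\Big)\big/\!\sim,
\]
where for every face $\tau\subseteq\sigma$ the subspace $U_\sigma\times|\tau|$ of $U_\sigma\times|\sigma|$ is identified with its image in $U_\tau\times|\tau|$ under the inclusion $U_\sigma\hookrightarrow U_\tau$.

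Next I would introduce the two projections $p\colon B\to X$, $(x,t)\mapsto x$, and $q\colon B\to N(\cU)$, $(x,t)\mapsto t$, and show each is a homotopy equivalence. For $p$, the fiber over $x\in X$ is the geometric realization of the full subcomplex of $N(\cU)$ spanned by $\{i : x\in U_i\}$, which is a single simplex, hence contractible; paracompactness of $X$ lets me choose a partition of unity subordinate to $\cU$ and promote this fiberwise contractibility to a deformation retraction of $B$ onto a copy of $X$ sitting inside it. For $q$, the fiber over an interior point of $\sigma$ is $U_\sigma$, which is contractible by the good cover hypothesis; combined with the CW structure on $N(\cU)$ this forces $q$ to be a homotopy equivalence.

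The main obstacle, and the step I would allocate the most care to, is upgrading the pointwise contractibility of the fibers to an honest homotopy equivalence on total spaces---this is where both hypotheses of the theorem do their real work. The cleanest route I know is to prove a gluing lemma of the form: if $f\colon E\to Y$ is a continuous map whose restriction over each element of a sufficiently nice open cover is a homotopy equivalence, then $f$ itself is a homotopy equivalence. I would prove this by induction on the skeleta of $N(\cU)$, at each stage invoking the pushout form of Mayer--Vietoris together with the standard fact that a pushout of homotopy equivalences along cofibrations is a homotopy equivalence. Paracompactness enters twice: once to supply the partition of unity used to build the retraction onto $X$, and once to guarantee the inductive construction behaves well in the possibly infinite case. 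For readers who only need the finite-cover setting, a simpler alternative is to induct on $|\cU|$ directly, gluing one cover element at a time via a Mayer--Vietoris pushout and using contractibility of the new intersections to preserve the inductive hypothesis.
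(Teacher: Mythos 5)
The paper does not actually prove this statement: the nerve lemma is quoted as background, with the proof deferred to the cited references (Borsuk; Edelsbrunner--Harer; Hatcher), and the surrounding text only remarks that paracompactness is used to obtain a partition of unity. So there is no in-paper argument to compare yours against. Your sketch is the standard proof via the Mayer--Vietoris blowup (Segal's construction; Hatcher, Section~4.G), and as an outline it is correct. The projection $p\colon B\to X$ is a homotopy equivalence because a partition of unity subordinate to $\cU$ gives a section $s\colon X\to B$ with $p\circ s=\id_X$, and the straight-line homotopy in the barycentric coordinates --- which stays inside $B$ because each fiber of $p$ is a full (possibly infinite) simplex, hence convex --- gives $s\circ p\simeq \id_B$. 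The projection $q\colon B\to N(\cU)$ is a homotopy equivalence by skeletal induction using the gluing lemma for pushouts along cofibrations, and this is exactly where the good-cover hypothesis (contractibility of each $U_\sigma$) is consumed. One small misattribution: $p$ being an equivalence needs only paracompactness, not goodness, while $q$ needs goodness but not paracompactness --- the possibly infinite skeletal induction for $q$ is controlled by taking a colimit along cofibrations rather than by a partition of unity. So paracompactness enters once, not twice; this does not affect the correctness of your argument, but you should tighten that sentence if you write the proof out in full.
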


The assumption that $X$ is \emph{paracompact} is a point-set topology condition: the relevant property is that an open cover of a paracompact topological space admits a \emph{partition of unity}, which is used in the proof of the nerve lemma.
Every metric space $X$ is paracompact.

One instance in which we can apply the nerve lemma is the following situation.
Fix a radius $r>0$.
Suppose $Z=\{z_1,\ldots,z_n\}$ is a collection of points in Euclidean space $\mathbb{R}^d$.
Let $B(z_i;r)=\{y\in \mathbb{R}^d~|~\|z_i-y\| < r\}$ be the open ball of radius $r$ about $z_i$.
Let $X = \cup_{i=1}^n B(z_i;r)$ be the union of balls.
The \emph{\v{C}ech complex of $Z$ at scale $r$} is the nerve simplicial complex of the cover of $X$ by the collection of balls $\cU=\{B(z_i;r)\}_{i=1}^n$.
Each ball is contractible (because it is convex), and each intersection of balls is either empty or contractible (because an intersection of convex sets is convex); see Figure~\ref{fig:sensors}.
Therefore, the nerve lemma applies, and says that the \v{C}ech complex of $Z$ at scale $r$ is homotopy equivalent to $X$, the union of the balls.

\subsection{The Sensor Network Problem}
Suppose we have a network of sensors where each sensor covers a ball.
We can ask whether the sensor network covers the domain or not.
Also, what are the minimal sensing capabilities that we need in order to determine if the sensor network covers the domain?

Let us set notation in order to make things more precise.
Let $D\subseteq \mathbb{R}^d$ be a bounded domain that is homeomorphic to a closed ball.
Let $\partial D$ be its boundary.
For example, we could let the domain be the closed unit ball $D=\{x\in\mathbb{R}^d~|~\|x\|\le1\}$, in which case the boundary of this domain would be the unit sphere $\partial D=\{x\in\mathbb{R}^d~|~\|x\|=1\}$.
For the static version of the sensor network problem, let $Z \subseteq D$ be a fixed finite set of sensor locations.
About each sensor $z\in Z$ we have a ball $B(z,r)$ of radius $r$ that is observed; see Figure~\ref{fig:sensor-balls}.
So inside of our domain, the covered region is $X \coloneqq \cup_{z\in Z} B(z,r) \cap D$, and the uncovered region is $X^c \coloneqq D \setminus X$.
We assume that the sensors cover the boundary of the domain, namely $\partial D \subseteq X$.
Suppose the sensors can mesure the \v{C}ech complex.
Can we use this connectivity information measured by the sensor balls to decide if all of $D$ is covered by sensors or not?
The answer is yes, using the $(d-1)$-dimensional homology of the \v{C}ech complex~\cite{Coordinate-free,de2007coverage}.

We next describe the \emph{mobile} or \emph{time-varying} sensor network problem.
Suppose now that the sensors are moving.
Let $I=[0,1]$ be the time interval.
Let $Z_t \subseteq D$ be the finite set of locations of the sensors at any fixed time $t\in I$.
At each time $t$, we have that the region covered by the sensors is $X_t \coloneqq \cup_{z_t\in Z_t} B(z_t,r) \cap D$ and the uncovered region is $X^c_t \coloneqq D \setminus X_t$.
Now we have that the covered region in spacetime is $X \coloneqq \cup_{t\in I} X_t \times \{t\} \subset D \times I$.
The uncovered region in spacetime is $X^c \coloneqq (D \times I) \setminus X$, which can equivalently be written as $X^c = \cup_{t\in I} X^c_t \times \{t\} \subset D \times I$.
In a mobile sensor network, we can ask whether an evasion path exists; that is, whether an intruder could remain in the domain undetected during a time interval $I$ of interest.
Formally, an \emph{evasion path} is a continuous map $p \colon I \to D$ such that $p(t)$ is in the uncovered region $X_t^c$ for all $t\in I$.
(Later, in Definition~\ref{def: tv map}, we will see this is equivalent to a \emph{time-varying map} $I \to X^c\subseteq D\times I$.)
We say that an evasion path does not exist if any continuous map $p\colon I \to D$ necessarily satisfies that $p(t)$ is in the covered region $X_t$ for some $t\in I$.

\section{Notions of equivalence between time-varying spaces}
\label{sec:time-varying}

We review notions of equivalence between spaces in Section~\ref{sec:equivalence}, before introducing notions of equivalence between time-varying spaces in Section~\ref{sec:tv-equivalence} and between pairs of spaces in Section~\ref{sec:pair-equivalence}.
In each section, we discuss both homeomorphisms and homotopy equivalences.

\subsection{General Notions of Equivalence}
\label{sec:equivalence}

We will begin by defining two notions of equivalence between topological spaces, homeomorphism and homotopy equivalence.
Later, we will describe time-varying spaces and generalize our definitions of homotopy equivalence and homeomorphism to suit this type of space.

\subsubsection{Homeomorphism}
We begin with homeomorphisms.

\begin{definition}\label{def:homeomorphism}
Two topological spaces $X$ and $Y$ are \emph{homeomorphic (denoted $X \cong Y$)} if there exists a continuous  bijection $f\colon X \to Y$ with continuous inverse $f^{-1}\colon Y\to X$.
If this is the case, then we say that the map $f$ is a \emph{homeomorphism}.
\end{definition}

For what follows, it will be useful to introduce slightly different notation for homeomorphisms.
Suppose that we are given continuous maps $f\colon X\to Y$ and $g\colon Y\to X$ such that the compositions $g\circ f=\id_X$ and $f\circ g=\id_Y$ are the respective identity maps on $X$ and on $Y$.
It then follows that $f$ and $g$ are bijective maps, and that $g=f^{-1}$.
So, if such continuous maps $f$ and $g$ exist, then $X$ and $Y$ are homeomorphic.

\begin{figure}[htb]
\centering
\includegraphics[width=4.1in]{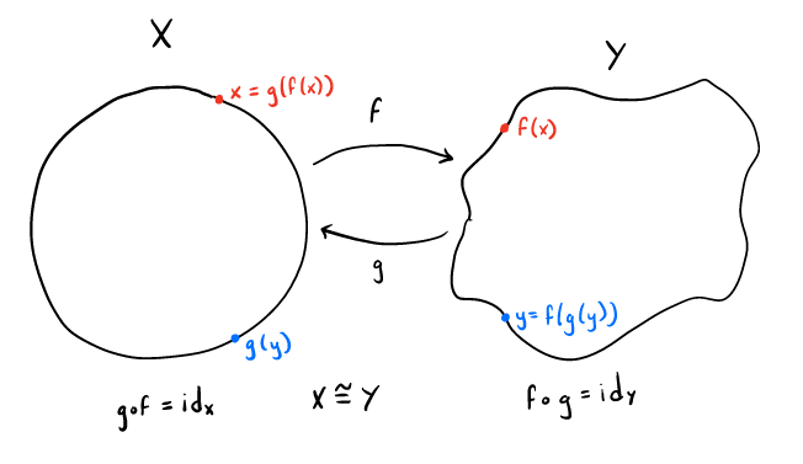}
\caption{Homeomorphism between topological spaces.}
\label{fig:homeomorphism}
\end{figure}

For example, see Figure~\ref{fig:homeomorphism}, which depicts a circle and a deformed circle.
If we map each point in $X$ to the corresponding point in $Y$ and vice versa via the continuous maps $f\colon X \to Y$ and $g\colon Y\to X$, we have $f \circ g = \id_Y$ and $g \circ f = \id_X$.
Thus, both $f$ and $g$ are homeomorphisms, with each other as inverses.

The reason why we introduce the notation of the map $g$ (instead of $f^{-1}$) is that we will relax the requirement that $f$ is a bijection when we next introduce the concept of homotopy equivalence.

\subsubsection{Homotopy Equivalence}
\label{sssec:homotopy-equiv}

Homotopy equivalences are a broad generalization of homeomorphisms to non-bijective maps.
Indeed, homotopy equivalence is a weaker notion of equivalence between spaces: any two homeomorphic spaces are homotopy equivalent, but the converse is not true in general.

\begin{figure}[htb]
\centering
\includegraphics[width=3.5in]{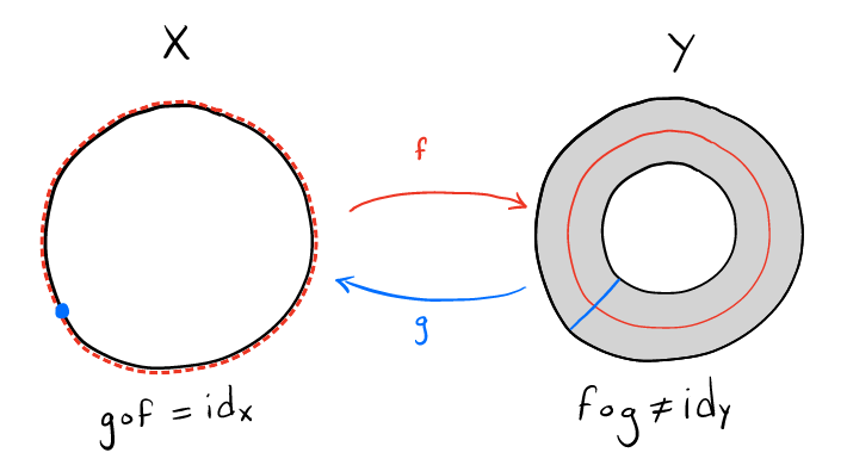}
\caption{Homotopy equivalence between topological spaces.}
\label{fig:homotopy}
\end{figure}

We begin with an example homotopy equivalence in Figure~\ref{fig:homotopy}, in which the space $X$ is a circle, the space $Y$ is an annulus, the map $f$ is not surjective, and the map $g$ is not injective.
Indeed, the circle and the annulus are not homeomorphic.
However, the maps $f$ and $g$ still encode a sense in which the two spaces are similar.
Indeed, note that in this example, we do have $g\circ f = \id_X$.
However, we have that $f\circ g\neq \id_Y$, since the entire (blue) line segment in the bottom left portion of $Y$ is mapped under $g$ to a single point of $X$, which is then mapped to a single point in $Y$.
Nevertheless, we do have that $f\circ g$ ``can be deformed'' to the identity map $\id_Y$, in a sense that we will make rigorous.

\begin{definition}
For topological spaces $X$ and $Y$ and maps $f_1, f_2 \colon X \to Y$, we say that $f_1$ and $f_2$ are \emph{homotopic}, denoted $f_1 \simeq f_2$, if there is a continuous function $F: X \times [0,1] \to Y$ such that $F(x, 0) = f_1 (x)$ and $F(x, 1) = f_2 (x)$ for all $x \in X$.
\end{definition}

\begin{definition}
Two topological spaces $X$ and $Y$ are homotopy equivalent, denoted $X \simeq Y$, if there exist continuous maps $f \colon X \to Y$ and $g \colon Y \to X$ such that $g \circ f \simeq \id_X$ and $f \circ g \simeq \id_Y$.
\end{definition}

In Figure~\ref{fig:homotopy}, we can see that although $f\circ g\neq \id_Y$, it is possible to continuously deform $\id_Y$ into $f\circ g$, as shown in Figure~\ref{fig:homotopy example}.
This means that $f \circ g \simeq \id_Y$.
Since we also have $g \circ f = \id_X$, this means $X\simeq Y$.
So, while the spaces shown in Figure~\ref{fig:homotopy} are not homeomorphic, they are homotopy equivalent.

\begin{figure}[htb]
\centering
\includegraphics[width=3in]{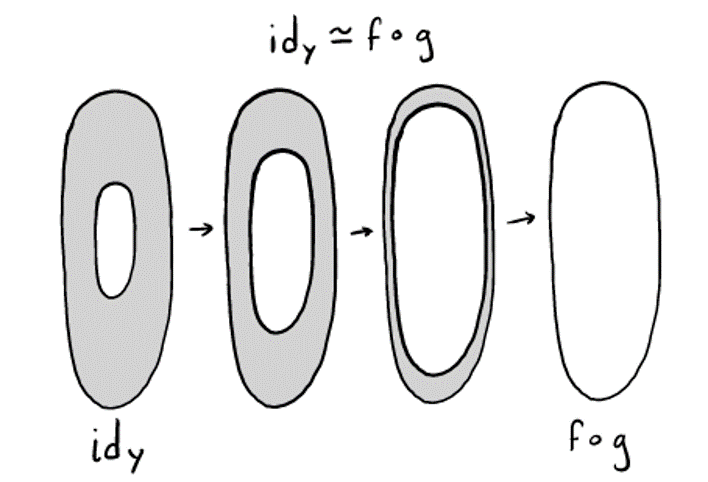}
\caption{An example of a homotopy from $\id_Y$ to $f\circ g$.}
\label{fig:homotopy example}
\end{figure}

\subsection{Time-Varying Spaces}
\label{sec:tv-equivalence}

In this section, we define time-varying spaces and maps between them.
In Subsections~\ref{subsubsection: Time Varying Homeomorphism} and~\ref{subsubsection: Time-Varying Homotopy Equivalence}, we will introduce homeomorphisms and homotopy equivalences between time-varying spaces.

For clarity, we will frequently be using the time interval $I=[0,1]$ in this section and throughout, though any of the following concepts can be applied to any interval $[a,b]$ such that $a \le b$.

\begin{definition}
\label{def: time-varying space}
A \emph{time-varying space $X$} is a topological space equipped with a continuous map to a time interval,  $q\colon X \to I$.
\end{definition}

\begin{definition}
\label{def: fiber}
At any time $t \in I=[0,1]$, we define $X_t \coloneqq q^{-1}(t)$ to be the \emph{fiber} of the time-varying space $X$ at time $t$.
\end{definition}

Figure~\ref{fig:time varying space} depicts the relations between the space, the time interval, the map to time, and a fiber of this map, as described in Definitions~\ref{def: time-varying space} and~\ref{def: fiber}.

We now define time-varying maps, which are designed to map from one time-varying space to another in a way that preserves the time coordinate.

\begin{figure}[htb]
\centering
\includegraphics[width=4in]{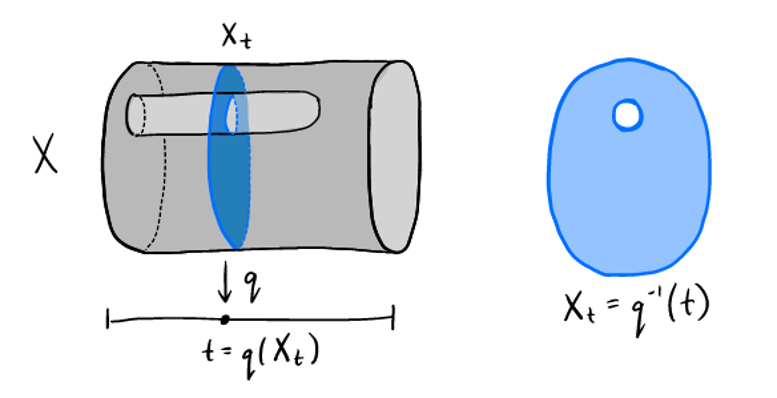}
\caption{A time-varying space $q\colon X\to I$ and one of its fibers.}
\label{fig:time varying space}
\end{figure}

\begin{definition}
\label{def: tv map}
Let $X$ and $Y$ be two time-varying spaces.
A function $f\colon X \to Y$ is a \emph{map between time-varying spaces} if $f$ is continuous and $f(X_t) \subseteq Y_t$ for all $t\in I$.
\end{definition}

We emphasize that time-varying maps are always continuous by definition.

\subsubsection{Time-Varying Homeomorphisms}
\label{subsubsection: Time Varying Homeomorphism}

Now that we have maps between time-varying spaces, we can define homeomorphisms between them as well.

\begin{definition}\label{def: time-varying homeomorphism}
A time-varying map $f \colon X \to Y$ is a \emph{time-varying homeomorphism} if f is a bijection and $f^{-1}$ is continuous.
If such a time-varying homeomorphism exists, then we say $X$ and $Y$ are \emph{time-varying homeomorphic}, denoted $X\cong_{tv} Y$.
\end{definition}

We remark that if $f \colon X \to Y$ is a time-varying homeomorphism, then we can restrict to get a standard homeomorphism $f_t \colon X_t \to Y_t$ at any time $t \in I$.

\begin{figure}[htb]
\centering
\includegraphics[width=3in]{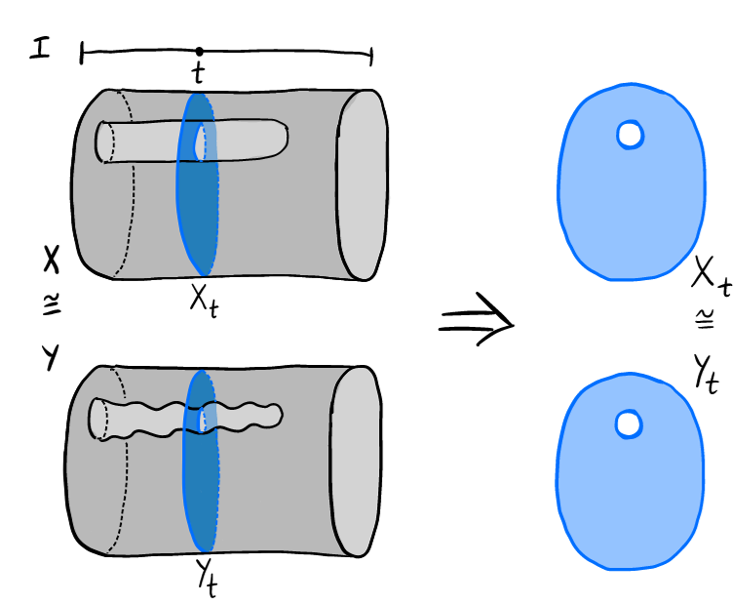}
\caption{An example time-varying homeomorphism.}
\label{fig:time varying homeomorphism}
\end{figure}

\subsubsection{Time-Varying Homotopy Equivalences}
\label{subsubsection: Time-Varying Homotopy Equivalence}

Let $J=[0,1]$ be the unit interval.
Recall that we have already defined $I=[0,1]$ to be the unit interval as well.
However, for time-varying maps, we use the symbol $J$ when we are thinking $[0,1]$ as parametrizing space, and we use the symbol $I$ when we are thinking of $[0,1]$ as parametrizing time.

\begin{definition}
If $Z$ and $W$ are time-varying spaces and $h,k \colon Z \to W$ are time-varying maps, then $h$ and $k$ are time-varying homotopic, denoted $h \simeq_{tv} k$, if there is a time-varying map $F \colon Z \times J \to W$ with $F(z,0)=h(z)$ and $F(z,1)=k(z)$ for all $z \in Z$.
\end{definition}

First of all, in order for $F$ to be a time-varying map, we need $Z\times J$ to have the structure of a time-varying space.
This is indeed the case: since $Z$ is a time-varying space, we have a map $q\colon Z\to I$, and then we can give $Z\times J$ the structure of time-varying space by defining the map $q' \colon Z\times J \to I$ via $q'(z,t)=q(z)$.
See Figure~\ref{fig:time varying interval}.

\begin{figure}[htb]
\centering
\includegraphics[width=4in]{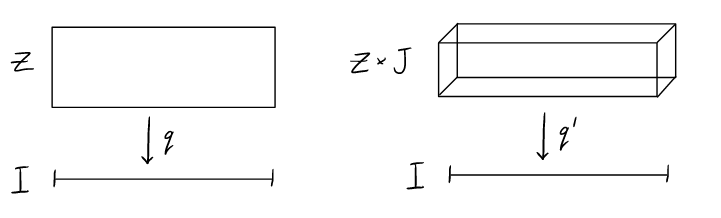}
\caption{Given a time-varying space $Z$ \emph{(left)}, we produce a time-varying space structure on $Z\times J$ \emph{(right)}.}
\label{fig:time varying interval}
\end{figure}

Here, $J$ encodes ``deformation space''; that is, for each $j \in J$, $F(z,j)$ produces another intermediate step in the deformation between $h(z)$ and $k(z)$.
See Figure~\ref{fig:homotopy-maps-and-def} for an example.

\begin{figure}[htb]
\centering
\includegraphics[width=4in]{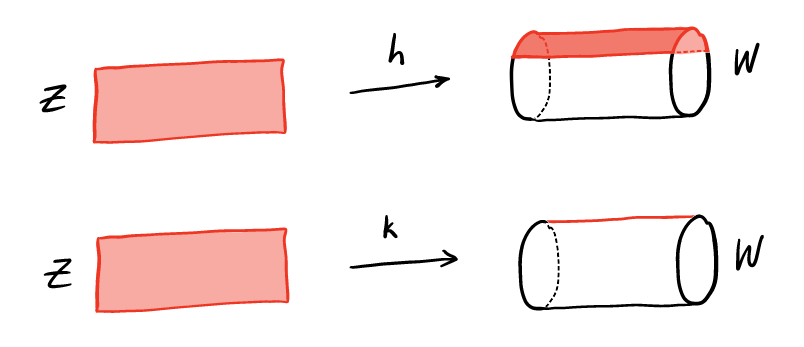}
\includegraphics[width=4in]{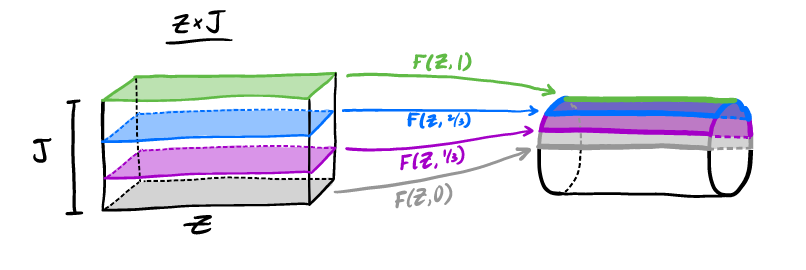}
\caption{(Top) The maps $h,k\colon Z\to W$.
(Bottom) A homotopy equivalence $F\colon Z\times J\to W$ that deforms from $h$ to $k$.}
\label{fig:homotopy-maps-and-def}
\end{figure}

\begin{definition}
\label{def:tv-he}
Two topological spaces $X$ and $Y$ are \emph{time-varying homotopy equivalent}, denoted $X \simeq_{tv} Y$, if there exist time-varying maps $f \colon X \to Y$ and $g \colon Y \to X$ such that $g \circ f \simeq_{tv} \id_X$ and $f \circ g \simeq_{tv} \id_Y$.
\end{definition}

\subsection{Pairs of Spaces}
\label{sec:pair-equivalence}

We now introduce the concept of \emph{pairs of spaces}, which we will use in Section~\ref{sec:application} to show that certain time-varying spaces are not time-varying homeomorphic.

\begin{definition}
A pair of spaces $X = (i \colon X_1 \hookrightarrow X_2)$ consists of two topological spaces $X_1 \subseteq X_2$ and an inclusion map $i \colon X_1 \hookrightarrow X_2$.
\end{definition}

For example, the topological spaces $X \hookrightarrow Y$ in Figure~\ref{fig:inclusion} form a pair of spaces.

\begin{figure}[htb]
\centering
\includegraphics[width=1.75in]{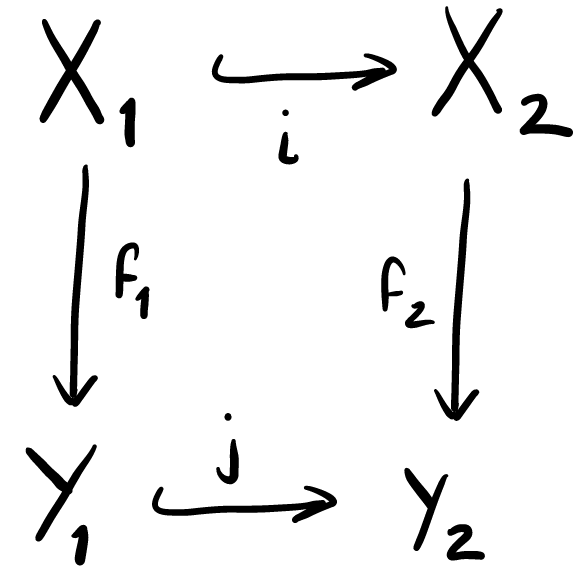}
\caption{Maps between pairs of spaces.}
\label{fig:maps between pairs}
\end{figure}

\begin{remark}\label{rmk: pairs of space from time varying space}
Let $X$ be a time-varying space and consider two times $t$ and $t'$.
As described in Definition~\ref{def: fiber}, this gives us two fibers of the time-varying space, $X_t$ and $X_{t'}$.
Suppose that, from time $t$ to time $t'$, we have an inclusion $X_t \subseteq X_{t'}$.
Then the inclusion map from time $t$ to $t'$ gives us a pair of spaces $X_t \hookrightarrow X_{t'}$.
\end{remark}

\begin{definition}
Suppose we have two pairs of spaces $X = (i \colon X_1 \hookrightarrow X_2)$ and $Y = (j \colon Y_1 \hookrightarrow Y_2)$.
A \emph{map $f\colon X \to Y$ between pairs of spaces} consists of the data $f=(f_1,f_2)$ where $f_1\colon X_1 \to Y_1$ is continuous, where $f_2\colon X_2 \to Y_2$, is continuous, and where $f_2 \circ i = j \circ f_1$.
We also consider a \emph{map $f\colon X \to Y$ between pairs of spaces up to homotopy}, which only needs to satisfy commutativity up to homotopy: $f_2 \circ i \simeq j \circ f_1$.
\end{definition}

See Figure~\ref{fig:maps between pairs} for the general diagram describing a map between pairs of spaces.
A specific example can be obtained by restricting Figure~\ref{fig:fibers} to the pairs of spaces $A_2 \hookrightarrow A_3$ and $B_2\hookrightarrow B_3$.

The following lemma will be used in our proof of Theorem~\ref{thm:not-homeomorphic} regarding two sensor networks that are time-varying homotopy equivalent but not time-varying homeomorphic.

\begin{lemma}\label{lem:map-between-pair-of-spaces-from-map-between-time-varying-spaces}
Let $f\colon X\to Y$ be a time-varying map, and fix times $t\le t'$.
Suppose that for all times $t\le s\le s'\le t'$ we have inclusions  $X_s \subseteq X_{s'}$ and $Y_s \subseteq Y_{s'}$, i.e.\ neither space grows smaller over the time interval $[t,t']$.
Then we have not only two pairs of spaces $X_t \hookrightarrow X_{t'}$ and $Y_t \hookrightarrow Y_{t'}$, but also a map between these two pairs of spaces up to homotopy.
\end{lemma}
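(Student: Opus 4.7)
The plan is to first extract the two pairs of spaces directly from the monotonicity hypothesis, then restrict the time-varying map $f$ to the fibers at times $t$ and $t'$ to build the candidate square, and finally witness the required commutativity up to homotopy by sliding points of $X_t$ through time and applying $f$ along the way.

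First, the hypothesis specialized to $s = t$, $s' = t'$ gives $X_t \subseteq X_{t'}$ and $Y_t \subseteq Y_{t'}$, so Remark~\ref{rmk: pairs of space from time varying space} immediately supplies pairs of spaces $(i\colon X_t \hookrightarrow X_{t'})$ and $(j\colon Y_t \hookrightarrow Y_{t'})$. Next, since $f$ is a time-varying map we have $f(X_s) \subseteq Y_s$ for every $s$, so the restrictions $f_1 \coloneqq f|_{X_t}\colon X_t \to Y_t$ and $f_2 \coloneqq f|_{X_{t'}}\colon X_{t'} \to Y_{t'}$ are well-defined continuous maps, giving the candidate map of pairs $(f_1,f_2)$.

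The crux is producing the homotopy $f_2 \circ i \simeq j \circ f_1$. I would define
\[
H\colon X_t \times J \to Y_{t'}, \qquad H(x,u) \;=\; f\bigl(x,\,(1-u)\,t + u\,t'\bigr).
\]
The monotonicity hypothesis applied to $X$ (with $s = t$) ensures that any $x \in X_t$ also lies in $X_r$ for every $r \in [t,t']$, so $(x,r) \in X$ and the right-hand side is defined; the monotonicity hypothesis applied to $Y$ (with $s' = t'$) ensures $f(x,r) \in Y_r \subseteq Y_{t'}$, so $H$ lands in $Y_{t'}$ as required. The boundary conditions $H(x,0) = f(x,t) = j \circ f_1(x)$ and $H(x,1) = f(x,t') = f_2 \circ i(x)$ are immediate, and continuity of $H$ follows from continuity of $f$ composed with the continuous affine interpolation in the time coordinate.

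The only place where some care is needed—not a real obstacle, but a notational one—is to keep clear when a point $x \in X_t$ is being treated as an element of the ambient fiber $X_{t'}$ via $i$, versus as a point of the total space $X$ at which $f$ can be evaluated at varying times. Once this identification is unpacked, the argument reduces to the observation that $f$ can be continuously evaluated along any time path staying inside the nested family of fibers, which is exactly what the monotonicity hypothesis on $X$ and $Y$ guarantees.
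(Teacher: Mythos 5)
Your proposal is correct and is essentially the paper's own argument: the paper defines the homotopy $F\colon X_t\times[t,t']\to Y_{t'}$ by $F(x,s)=j_{s,t'}\circ f_s\circ i_{t,s}(x)$, which is exactly your $H$ up to the affine reparametrization of $[0,1]$ onto $[t,t']$ and up to writing the ``slide $x$ forward in time, apply $f$, include into $Y_{t'}$'' composite via explicit inclusion maps rather than via the point-of-spacetime notation $f(x,r)$. The notational care you flag at the end is precisely what the paper's inclusions $i_{t,s}$ and $j_{s,t'}$ are there to handle.
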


\begin{proof}
For all $t\le s\le s'\le t'$, let $i_{s,s'} \colon X_s \hookrightarrow X_{s'}$ and $j_{s,s'}\colon Y_s \hookrightarrow Y_{s'}$ be the given inclusions from time $s$ to $s'$.
The map $f\colon X\to Y$ induces maps $f_s\colon X_s\to Y_s$ on the fibers.
We claim that $f_{t'} \circ i_{t,t'} \simeq j_{t,t'} \circ f_t$ (giving a map between pairs of spaces up to homotopy).
Indeed, note that the continuous map $F\colon X_t \times [t,t'] \to Y_{t'}$ defined by $F(x,s)=j_{s,t'}\circ f_s \circ i_{t,s}$ is a homotopy from $F(\cdot,t)=j_{t,t'} \circ f_t$ to  $F(\cdot,t')=f_{t'} \circ i_{t,t'}$.
\end{proof}

\section{Application to a sensor network evasion problem}
\label{sec:application}

Figure~\ref{fig:spaces} shows two time-varying spaces, A and B, with time progressing from left to right.
The shaded portion indicates the region in spacetime covered by sensors, and the unshaded portion indicates the uncovered region.

In the sensor network A, we begin with sensors covering the boundary $\partial D$, and with mobile sensors covering the bottom half of the domain $D$.
These sensors covering the bottom half retreat to the boundary, leaving only sensors on the boundary and a horizontal line of sensors.
Two sensors on this horizontal line move up into the top hole, and then these two sensors move a bit closer together, forming a new square hole.
The bottom two sensors of this hole move a little bit further apart, and the edge on the bottom of this square hole disappears.
The resulting squiggly line of sensors straightens out, and then sensors flood in from the top half of the boundary in order to cover the top half of the domain $D$.

\begin{figure}[htb]
\centering
\includegraphics[width=5in]{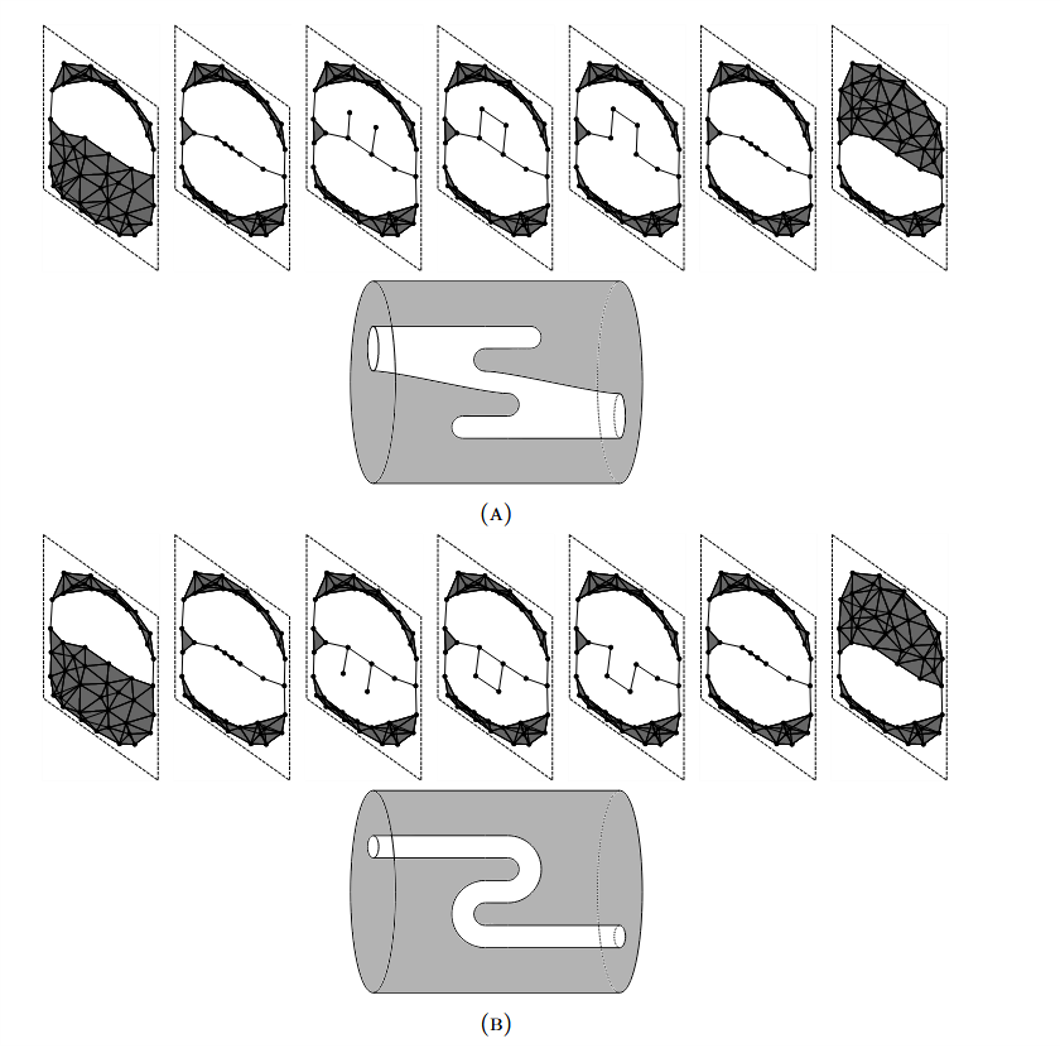}
\caption{Two time-varying spaces A and B, shown both as 7 snapshots in time, along with a cartoon beneath showing the covered region in spacetime.
Figure from~\cite{EvasionPaths}}
\label{fig:spaces}
\end{figure}

In the sensor network B, the motions of the sensors are identical, except that the small square hole opens up below the horizontal line of sensors.
Note that the sensor networks A and B measure \v{C}ech complexes that are combinatorially identical at all times $t$.
Nevertheless, the sensor network A has an evasion path in its uncovered region, whereas the sensor network B does not (since an intruder is not allowed to ever move backwards in time).
See~\cite{EvasionPathsVideo} for a video showing the motion of the sensors in these two examples.

Our goals are as follows: 
\begin{itemize}
\item In Subsections~\ref{subsection:homeomorphism} we will demonstrate that there is no time-varying homeomorphism between the two spaces A and B.
(It is worth noting that the two spaces are time-varying homotopy equivalent, but their complements are not.)
\item In Subsection~\ref{subsection:homotopy-equivalence}, we will demonstrate that the covered regions of the two spaces A and B are time-varying (fiberwise) homotopy equivalent.
This fact was stated in~\cite{EvasionPaths}, but here we elaborate on the proof.
\item In Subsection~\ref{subsection:uncovered-not-homotopy-equivalent}, we will show that the complements of these spaces --- the uncovered regions of $A$ and $B$ --- are \emph{not} time-varying homotopy equivalent.    One uncovered region has an evasion path (a continuous time-varying map from the interval in) whereas the other doesn't.
If the two uncovered regions were time-varying homotopy equivalent, one would have an evasion path if and only if the other did.
\end{itemize}

\begin{figure}[htb]
\centering
\includegraphics[width=5in]{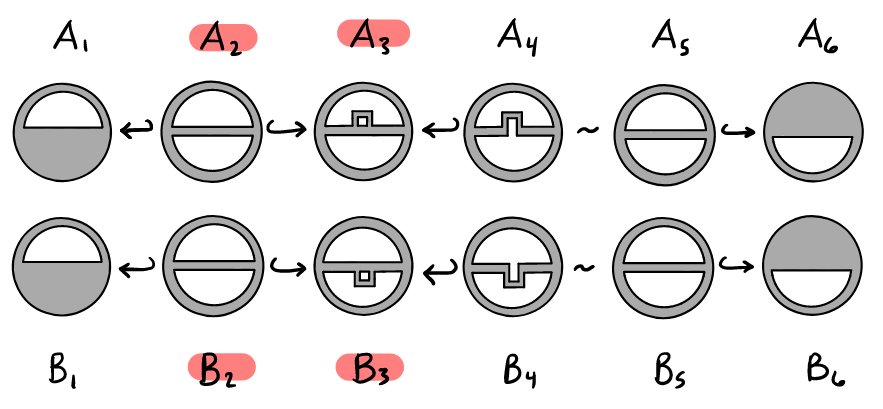}
\caption{Fibers of the time-varying spaces $A$ and $B$.}
\label{fig:fibers}
\end{figure}

There is an analogy that can be made with knot theory.
In knot theory, any two knots are circles, but they may be embedded in $\mathbb{R}^3$ in different ways, and have different complements (see for example the Gordon--Luecke Theorem~\cite{gordon1989knots}).
In a similar way, the two covered regions in A and B are the same time-varying space; they are time-varying homotopy equivalent.
However, they are mapped into spacetime in different ways, and they have different complements that are not time-varying homotopy equivalent (as one has an evasion path though the other does not).
So, we think of the sensor network examples A and B as two different ``spacetime knots'' formed by mapping the same space into spacetime in two different ways.

\subsection{Homeomorphism}\label{subsection:homeomorphism}

We will show that the covered regions of the sensor networks A and B are not time-varying homeomorphic.
First, we can examine the fibers of $A$ and $B$, as described in Definition~\ref{def: fiber}.
Figure~\ref{fig:fibers} depicts the fibers of $A$ and $B$ that we are specifically interested in; we label these specific fibers as $A_1,\ldots,A_7$ and $B_1,\ldots,B_7$.
When we show that there is no time-varying homeomorphism between these sensor network examples, we first pass to a commutative diagram between pairs of spaces up to homotopy.

\begin{theorem}
\label{thm:not-homeomorphic}
The sensor networks A and B in Figure~\ref{fig:spaces} are not time-varying homeomorphic.
\end{theorem}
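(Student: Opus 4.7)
Suppose for contradiction that a time-varying homeomorphism $f\colon A\to B$ exists. My plan is to produce two commutative diagrams of pairs of spaces (up to homotopy) by applying Lemma~\ref{lem:map-between-pair-of-spaces-from-map-between-time-varying-spaces} to two different time windows, then to derive a contradiction on first homology. I would choose three fibers at times $t_1<t_2<t_3$ so that the covered region shrinks monotonely on $[t_1,t_2]$ (as the bottom-half sensors retreat to $\partial D$) and grows monotonely on $[t_2,t_3]$ (as two sensors bubble off the horizontal line to form the small square, with the remaining line sensors keeping the line covered). Since the sensor positions coincide in $A$ and $B$ at stages $1$ and $2$, one has $A_1=B_1$ and $A_2=B_2$. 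Applying the lemma to the growing window, together with its straightforward analogue for the shrinking direction (obtained by running the same interpolation argument in reverse), yields two pair maps up to homotopy
\[
(f_2,f_1)\colon(A_2\hookrightarrow A_1)\to(B_2\hookrightarrow B_1)\quad\text{and}\quad(f_2,f_3)\colon(A_2\hookrightarrow A_3)\to(B_2\hookrightarrow B_3),
\]
in which every fiber map is a homeomorphism.

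I would then pass to first homology, where the homotopy-commuting squares become strictly commuting squares of abelian groups. Let $T,B$ denote the classes in $H_1(A_2)=H_1(B_2)=\mathbb{Z}^2$ of loops around the top and bottom holes of the pair-of-pants, and let $a,b,c$ (respectively $a',b',c'$) be the classes in $H_1(A_3)$ (respectively $H_1(B_3)$) of loops around the top-rest, bottom, and small-square holes. The inclusion-induced maps on $H_1$ are
\[
T\mapsto T,\ B\mapsto 0\text{ in }H_1(A_1);\qquad T\mapsto a+c,\ B\mapsto b\text{ in }H_1(A_3);
\]
\[
T'\mapsto T',\ B'\mapsto 0\text{ in }H_1(B_1);\qquad T'\mapsto a',\ B'\mapsto b'+c'\text{ in }H_1(B_3).
\]
The asymmetry---$T$ becomes a sum of two classes in $A_3$, while $B'$ becomes a sum in $B_3$---is exactly where the ``above versus below the line'' difference between the two sensor networks enters. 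Commuting the first square forces $(f_2)_*(B)\in\ker(H_1(B_2)\to H_1(B_1))=\mathbb{Z}\cdot B'$, and since $(f_2)_*$ is an automorphism of $\mathbb{Z}^2$, this forces $(f_2)_*(B)=\pm B'$. Feeding this into the second square gives $(f_3)_*(b)=(i_B)_*\circ(f_2)_*(B)=\pm(b'+c')$.

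The contradiction now falls out: $A_3$ and $B_3$ are each homeomorphic to a $4$-holed sphere, so the fiber homeomorphism $f_3$ must carry boundary circles to boundary circles. In particular, since $b\in H_1(A_3)$ is the class of the bottom inner boundary of $A_3$, the element $(f_3)_*(b)$ is forced to lie in the set $\{\pm a',\pm b',\pm c',\pm(a'+b'+c')\}$ of classes of boundary circles of $B_3$. But $\pm(b'+c')$ is not in this set, ruling out the existence of any such $f_3$ and hence of $f$.

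The step I expect to require the most care is verifying the monotonicity hypothesis of Lemma~\ref{lem:map-between-pair-of-spaces-from-map-between-time-varying-spaces} on both windows, i.e.\ that the covered region genuinely nests at every intermediate time and not merely at the endpoint fibers. For the shrinking window this means arranging the bottom-half sensors' retreat so that no point is ever re-uncovered, and for the growing window it means the two moving sensors never expose a previously covered portion of the line. Both conditions are geometrically natural and can be arranged by selecting $A_1,A_2,A_3$ (and their $B$-counterparts) at suitable interior moments of each monotone sub-interval, without affecting any of the topological conclusions above.
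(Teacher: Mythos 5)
Your proof is correct, but it takes a genuinely different route from the paper's. The paper also contradicts Lemma~\ref{lem:map-between-pair-of-spaces-from-map-between-time-varying-spaces}, but it uses only the single growing window between the fibers at stages $2$ and $3$, works in $\pi_1$, and --- crucially --- begins by \emph{assuming} that the time-varying homeomorphism restricts to the identity on $\partial D\times I$; it then asserts (without proof, essentially a mapping-class-group fact about the pair of pants and the $4$-holed sphere) that any fiber homeomorphism fixing $\partial D$ must send the loop $a$ to something homotopic to $b$ and the loop $c$ to something homotopic to the outer loop of $B_3$. You replace both of these steps: the boundary-fixing assumption is eliminated by your first (shrinking) window, where homotopy-commutativity forces $(f_2)_*(B)$ into $\ker\bigl(H_1(B_2)\to H_1(B_1)\bigr)=\mathbb{Z}\cdot B'$ and primitivity pins it to $\pm B'$; and the mapping-class assertion is replaced by the elementary fact that a homeomorphism of $4$-holed spheres permutes boundary components and hence carries peripheral classes to peripheral classes, while $\pm(b'+c')$ is not peripheral. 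This buys you a more self-contained and arguably more rigorous argument (the paper never justifies why one may assume $f$ is the identity on the fence region), at the cost of needing the monotonicity hypothesis of the lemma on \emph{two} windows rather than one; you are right to flag that check, and note that the paper makes the same kind of unproved nesting assertion for its single window, so you are not held to a higher standard there. Two small points to tidy if you write this up: verify that your reversed-inclusion analogue of the lemma is stated with the pairs oriented as $(A_2\hookrightarrow A_1)$, as you have it, and say a word about why the peripheral-class argument survives the fact that the covered fibers are unions of open balls intersected with $D$ rather than literal compact surfaces with boundary (the ends, and hence the peripheral classes up to sign, are still preserved by any homeomorphism).
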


\begin{figure}[htb]
\centering
\includegraphics [width=3in]{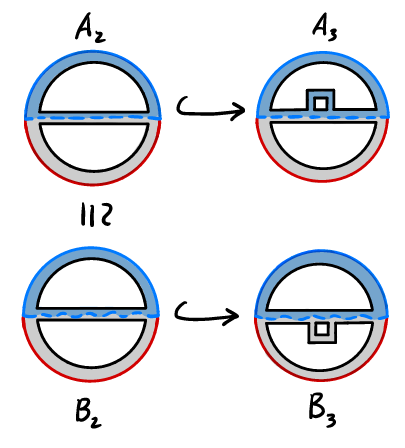}
\caption{Lack of homeomorphism between $A_3^{top}$ and $B_3^{top}$.}
\label{fig:halves}
\end{figure}

Let us first give an intuitive argument for why Theorem~\ref{thm:not-homeomorphic} is true.
Suppose for a contradiction that we have a time-varying homeomorphism $f\colon A\to B$ that restricts to the identity map on the boundary region $\partial D \times I$ covered by the fence sensors.
We have inclusions $i\colon A_2 \hookrightarrow A_3$ and $j\colon B_2 \hookrightarrow B_3$, and furthermore the fibers are only growing larger for all intermediate times in the intervals between these fibers.
The homeomorphism $f$ restricts to homeomorphisms $f_2 \colon A_2\to B_2$ and $f_3\colon A_3\to B_3$ on each fiber.
Let $A_2^{top}$ represent the top half of the fiber $A_2$; see Figure~\ref{fig:halves} where $A_2^{top}$ is shaded (in blue).
By Lemma~\ref{lem:map-between-pair-of-spaces-from-map-between-time-varying-spaces}, this diagram commutes up to homotopy, i.e.\ $f_3\circ i\simeq j\circ f_2$ as maps from $A_2$ to $B_3$.
But as we will show, this will contradict the fact that applying the homeomorphism $f_3$ to $i(A_2)$ maps onto loops in the fundamental group of $B_3$ that are not homotopic to loops in the image of $j$ applied to $f_2(A_2)$.
We now make this argument rigorous.

\begin{figure}[htb]
\centering
\includegraphics [width=3in]{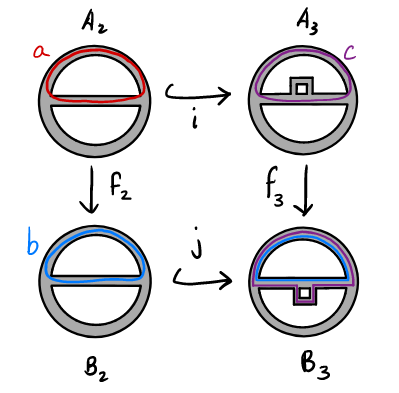}
\caption{Maps between selected fibers of $A$ and $B$.}
\label{fig:maps-between-fibers}
\end{figure}

\begin{proof}[Proof of Theorem~\ref{thm:not-homeomorphic}]
Suppose for a contradiction that we have a time-varying homeomorphism $f\colon A\to B$ that restricts to the identity map on the boundary region $\partial D \times I$ covered by the fence sensors.
We have inclusions $i\colon A_2 \hookrightarrow A_3$ and $j\colon B_2 \hookrightarrow B_3$, and furthermore the fibers are only growing larger for all intermediate times in the intervals between these fibers.
The homeomorphism $f$ restricts to homeomorphisms $f_2 \colon A_2\to B_2$ and $f_3\colon A_3\to B_3$ on each fiber.
Consider the loop $a$ in $A_2$, which maps under the inclusion $i$ to the loop $c$ in $A_3$.
The only homeomorphism $f_2\colon A_2\to B_2$ that restricts to the identity on the boundary $\partial D$ sends the loop $a$ in $A_2$ to a loop homotopic to the loop $b$ in $B_2$, which maps under the inclusion $j$ to a loop homotopic to the inner (blue) loop in $B_3$.
The only homeomorphisms $f_3\colon A_3\to B_3$ that restrict to the identity on the boundary $\partial D$ send the loop $c$ in $A_3$ to loops homotopic to the outer (purple) loop in $B_3$.
However, we can then see that in $B_3$, the loop $j(f_2(a))$ is not homotopy equivalent to $f_3(i(a))$, meaning that $j\circ f_2$ and $f_3 \circ i$ induce different maps on the fundamental group $\pi_1(A_2)\to\pi_1(B_3)$.
Hence $f_3\circ i\not\simeq j\circ f_2$.
This contradicts Lemma~\ref{lem:map-between-pair-of-spaces-from-map-between-time-varying-spaces}, which implies that this diagram commutes up to homotopy, i.e.\ $f_3\circ i\simeq j\circ f_2$.
\end{proof}

The reason why this proof does not preclude the possibility of having a time-varying homotopy equivalence $A\to B$ is that if $f_3$ is only a homotopy equivalence (instead of a homeomorphism) preserving $\partial D$, then $f_3$ could map the loop $c$ to the inner (blue) loop $j(b)$, as we will see in the next section.

\subsection{Homotopy Equivalence between Covered Regions}\label{subsection:homotopy-equivalence}

We now explain why the covered regions of the sensor networks A and B in Figure~\ref{fig:spaces} are in fact time-varying homotopy equivalent.

Recall from Definition~\ref{def:tv-he} that two topological spaces X and Y are \emph{time-varying homotopy equivalent}, denoted $X \simeq_{tv} Y$, if there exist time-varying maps $f \colon X \to Y$ and $g \colon Y \to X$ such that $g \circ f \simeq_{tv} \id_X$ and $f \circ g \simeq_{tv} \id_Y$.
We will show that such maps $f\colon A\to B$ and $g\colon B\to A$ exist for the sensor network spaces $A$ and $B$.

Let $A_t$ represent the union of the sensor balls in network $A$ at time $t$, and let $A'_t$ be the nerve of the sensor balls at time $t$, i.e.\ the \v{C}ech complex of the sensors at time $t$.
Similarly, let $B_t$ represent the union of the sensor balls in network $B$ at time $t$, and let $B'_t$ be the nerve of these sensor balls at time $t$.
We can think of the nerves, $A'_t$ and $B'_t$, as collections of vertices, edges, and triangles representing portions of the 2-dimensional covered region.

\begin{theorem}
\label{thm:covered-homotopy-equivalent}
The sensor networks A and B (see Figure~\ref{fig:spaces}) are time-varying homotopy equivalent.
\end{theorem}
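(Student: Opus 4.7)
The plan is to construct explicit time-varying maps $f\colon A\to B$ and $g\colon B\to A$ and then verify that their compositions are time-varying homotopic to the identities. The key structural fact is that the \v{C}ech complexes of $A$ and $B$ are combinatorially identical at every time $t$, so there is a canonical, time-invariant bijection between the sensors of $A$ and the sensors of $B$ that induces an isomorphism of nerves $\phi_t\colon A'_t\to B'_t$ at every $t$. Moreover, outside a small spacetime neighborhood $U$ of the region where the ``small square hole'' opens, the sensor positions in $A$ and $B$ actually coincide, so $A$ and $B$ agree as subspaces of $D\times I$ there. On that common part I would take $f$ and $g$ to be the identity, so that the only real work is to extend continuously over $U$.

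Inside $U$, I would realize the nerves geometrically as subcomplexes $A'|_U\subseteq A|_U$ and $B'|_U\subseteq B|_U$ by placing a vertex at each sensor center and an interior point in each non-empty $k$-fold intersection of balls (e.g.\ its centroid), then coning appropriately. Because the sensor balls move continuously in time, these subcomplexes sit inside $A$ and $B$ as time-varying subspaces. The classical proof of the nerve lemma (Theorem~\ref{thm: nerve lemma}) constructs a deformation retraction of each fiber onto its nerve using a partition of unity subordinate to the cover by balls; since such a partition can be chosen to vary continuously in $t$, this yields time-varying deformation retractions $r^A\colon A|_U\to A'|_U$ and $r^B\colon B|_U\to B'|_U$. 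I would then define $f$ on $U$ as the composition $A|_U \xrightarrow{r^A} A'|_U \xrightarrow{\phi} B'|_U \hookrightarrow B|_U$, and $g$ on $U$ analogously with $\phi^{-1}$, and glue with the identity on the complement of $U$ (taking care that $\phi$ is the identity on sensors near $\partial U$ so the gluing is continuous).

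To conclude, I would verify $g\circ f \simeq_{tv} \id_A$ and $f\circ g \simeq_{tv} \id_B$. On each fiber inside $U$ the composition is the standard ``deformation retract, then include'' map, which is homotopic to the identity via the straight-line partition-of-unity homotopy used in the proof of the nerve lemma; because the partition of unity is continuous in $t$, these fiberwise homotopies assemble into a time-varying map $A\times J\to A$, giving a time-varying homotopy to the identity. Outside $U$, both compositions are the identity on the nose.

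The main obstacle is ensuring continuity in $t$ at the instants when the combinatorial structure of the nerve changes --- when a pair of sensor balls just begins or just ceases to overlap. At such a moment a simplex is being born or killed in the nerve, and both the embedded subcomplex $A'|_U\subseteq A|_U$ and the deformation retraction onto it must be arranged so that the appearing simplex collapses continuously to one of its faces at the boundary time. For the specific transitions in $A$ and $B$ these are localized to the neighborhood of the square-hole event, where I would write down an explicit local model (a small disk with two, three, or four balls overlapping) and check the collapse by hand. Secondarily, one must check compatibility of the two definitions of $f$ along $\partial U$ in spacetime; choosing $\phi$ to be the identity on the ``boundary sensors'' of $U$ makes this automatic.
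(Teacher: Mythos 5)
Your construction is essentially the one the paper uses: fiberwise nerve-lemma homotopy equivalences $A_t \simeq A'_t$ and $B_t \simeq B'_t$ chosen to vary continuously in $t$, composed with the simplicial isomorphism of nerves induced by the canonical sensor bijection --- this is exactly the paper's homeomorphism $h_t\colon A'_t\to B'_t$, which ``reflects'' the small square hole across the horizontal line --- together with the same algebraic check that $g\circ f\simeq_{tv}\id_A$ and $f\circ g\simeq_{tv}\id_B$. The one step that would fail as written is the gluing along $\partial U$. Outside $U$ you set $f=\id$, but inside $U$ you set $f$ equal to ``retract onto the nerve, apply $\phi$, include back,'' and the retraction $r^A$ onto the partially one-dimensional nerve moves almost every point of the two-dimensional fiber; arranging that $\phi$ is the identity on sensors near $\partial U$ does not make the composite equal to the identity there, so the two definitions of $f$ disagree on $\partial U$ and the glued map is discontinuous. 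The fix is either to taper $r^A$ and $r^B$ to the identity in a collar of $\partial U$ via a bump function in time, or --- simpler, and what the paper does --- to drop the localization entirely and define $f_t=\beta'_t\circ h_t\circ\alpha_t$ at \emph{every} time $t$, since the nerve-lemma maps and the nerve isomorphisms can be chosen to vary continuously over the whole interval. With that change your argument goes through and coincides with the paper's proof.
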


\begin{proof}
For ease of exposition, let us first restrict attention to a fixed time $t$, and explain what the restricted maps $f_t\coloneqq f|_{A_t}\colon A_t \to B_t$ and $g_t\coloneqq g|_{B_t}\colon B_t \to A_t$ look like.

By the nerve lemma (Theorem~\ref{thm: nerve lemma}), we know that each fiber, $A_t$ or $B_t$, is homotopy equivalent to its nerve; $A_t \simeq A'_t$ and $B_t \simeq B'_t$.
In other words, we have maps $\alpha_t\colon A_t \to A'_t$ and $\alpha'_t\colon A'_t\to A_t$ with $\alpha'_t\circ \alpha_t\simeq \id_{A_t}$ and $\alpha_t\circ \alpha'_t\simeq \id_{A'_t}$.
Similarly, we have maps $\beta_t\colon B_t \to B'_t$ and $\beta'_t\colon B'_t\to B_t$ with $\beta'_t\circ \beta_t\simeq \id_{B_t}$ and $\beta_t\circ \beta'_t\simeq \id_{B'_t}$.
Furthermore, it is possible to choose these maps $\alpha_t, \alpha'_t, \beta_t, \beta'_t$ so that they vary continuously with $t$.

We also have homeomorphisms $A'_t \cong B'_t$ between the nerves of the covered regions for all $t$.
Let us denote this homeomorphism as $h_t\colon A'_t \to B'_t$ (and hence $h_t^{-1}\colon B'_t \to A'_t$.)
For many choices of $t$, this is clear; we can use the identity map to send $A'_t$ to $B'_t$.
We will examine fibers $A'_3$ and $B'_3$, and also $A'_4$ and $B'_4$, in more detail.
See Figure~\ref{fig:ft-chain-sketch}, which shows the homeomorphism $h_3\colon A'_3\to B'_3$ which ``reflects'' the small rectangular hole over a horizontal axis.
This is the main trick needed when constructing our time-varying homotopy equivalence.
The homeomorphism $h_4\colon A'_4\to B'_4$ is obtained by restricting $h_3$ to the subspace $A'_4\subseteq A'_3$, namely $h_4=h_3|_{A'_4}$.
It is furthermore possible to choose these homeomorphisms $h_t$ so that they vary continuously with $t$.

\begin{figure}[htb]
\centering
\includegraphics[width=5in]{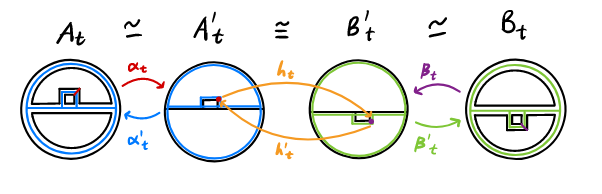}
\caption{Figure illustrating the proof of Theorem~\ref{thm:covered-homotopy-equivalent}.}
\label{fig:ft-chain-sketch}
\end{figure}

For each $t\in I$, we define the map $f_t\colon A_t \to B_t$ between the fibers of the sensor networks $A$ and $B$ as $f_t = \beta'_t \circ h_t \circ \alpha_t$.
Similarly, we define the map $g_t\colon B_t \to A_t$ as $g_t = \alpha'_t \circ h^{-1}_t \circ \beta_t$.
Note that $g_t \circ f_t \simeq \id_{A_t}$, since
\begin{align*}
    g_t \circ f_t &= (\alpha'_t \circ h^{-1}_t \circ \beta_t) \circ (\beta'_t \circ h_t \circ \alpha_t) \\
    &= \alpha'_t \circ h^{-1}_t \circ \beta_t \circ \beta'_t \circ h_t \circ \alpha_t \\
    &\simeq \alpha'_t \circ h^{-1}_t \circ h_t \circ \alpha_t &&\text{since }
    \beta_t\circ \beta'_t\simeq \id_{B'_t} \\
    &= \alpha'_t \circ \alpha_t \\
    &\simeq \id_{A_t}.
\end{align*}
Similarly, note that $f_t \circ g_t \simeq \id_{B_t}$, since
\begin{align*}
    f_t \circ g_t &= (\beta'_t \circ h_t \circ \alpha_t) \circ (\alpha'_t \circ h^{-1}_t \circ \beta_t) \\
    &= \beta'_t \circ h_t \circ \alpha_t \circ \alpha'_t \circ h^{-1}_t \circ \beta_t \\
    &\simeq \beta'_t \circ h_t \circ h^{-1}_t \circ \beta_t &&\text{since }
    \alpha_t\circ \alpha'_t\simeq \id_{A'_t} \\
    &= \beta'_t \circ \beta_t \\
    &\simeq \id_{B_t}.
\end{align*}
Next, we piece together the maps on the fibers in order to define $f\colon A\to B$ and $g\colon B\to A$.
Indeed if $a\in A$, then there is a unique $t$ with $a\in A_t\subseteq A$, in which case we let $f(a) = f_t(a) \in B_t \subseteq B$.
We define $g \colon B\to A$ similarly: if $b\in B_t \subseteq B$, then let $g(b)=g_t(b) \in A_t \subseteq A$.
The maps $f$ and $g$ are continuous because the maps $f_t$ and $g_t$ to vary continuously in $t$.
Similarly, the homotopy equivalences $g_t \circ f_t \simeq \id_{A_t}$ and $f_t \circ g_t \simeq \id_{B_t}$ piece together in a continuous way in order to give time-vary homotopy equivalences $g \circ f \simeq \id_{A}$ and $f \circ g \simeq \id_{B}$, i.e.\ giving that $A \simeq_{tv} B$.

In summary, we showed that $A' \cong_{tv} B'$ via continuously-varying homeomorphisms $A'_t \cong B'_t$ for all $t$.
Similarly, we showed $A \simeq_{tv} A'$ and $B' \simeq_{tv} B$ via continuously-varying homotopy equivalences $A_t \simeq A'_t$ and $B_t \simeq B'_t$ for all $t$.
Composing these together, we obtained
\[A \simeq_{tv} A' \cong_{tv} B' \simeq_{tv} B.\]
Thus $A \simeq_{tv} B$, as desired.
\end{proof}

The key reason \emph{why} we were able to show the spaces $A$ and $B$ are time-varying homotopy equivalent, even though they are not time-varying homeomorphic, is as follows.
After we performed a homotopy equivalence to contract the two-dimensional space $A_t$ down to the nerve complex $A'_t$ in Figure~\ref{fig:ft-chain-sketch} that is partially one-dimensional, and similarly for the homotopy equivalence $B_t \simeq B'_t$, we were able to see that the nerves $A'_t$ and $B'_t$ were homeomorphic through a map that preserved the fence sensors. 
However, if we hadn't yet shrunk things down to be one-dimensional (using a homotopy equivalence that is not a homeomorphism), then we'd be trying to find a homeomorphism between the 2-dimensional spaces $A_t$ and $B_t$ along the lines of Figure~\ref{fig:not-tv-homeomorphic}.

\begin{figure}[htb]
\centering
\includegraphics [width=4in]{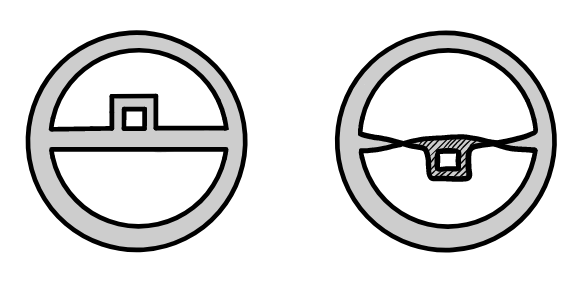}
\caption{
An attempted time-varying homeomorphism from $A$ to $B$ goes astray (shown here restricted to the fibers at time $3$).}
\label{fig:not-tv-homeomorphic}
\end{figure}

Figure~\ref{fig:not-tv-homeomorphic} is evocative of \emph{ribbon graphs} or \emph{fat graphs}, which (roughly speaking) are graphs equipped with a cylic ordering of the edges around each vertex.
In this language, the two graphs in Figure~\ref{fig:not-tv-homeomorphic} are equivalent as graphs, but not equivalent as ribbon graphs.
A ribbon graph structure determines at most one embedding in the 2-sphere up to isotopy, i.e.\ at most one embedding into the plane $\mathbb{R}^2$ up to isotopy if you fix which cycle is the ``fence'' (the outer cycle).
One can interpret Figure~\ref{fig:not-tv-homeomorphic} as a cartoon illustration for why one cannot construct a time-varying homeomorphism from sensor network $A$ to sensor network $B$: even though the sensor networks $A$ and $B$ are time-varying homotopy equivalent, and even though the fibers $A_3$ and $B_3$ are homeomorphic, no time-varying homeomorphism from $A$ to $B$ can preserve the ribbon graph structure (cyclic orderings of edges around each vertex) on each fiber.
One could also use this to construct a proof that no time-varying homeomorphism from $A$ to $B$ exists.
For further references on ribbon graphs, we point the reader to~\cite{Igusa,MoharThomassen}, and for their application to mobile sensor network problems we refer the reader to~\cite[Section~8]{EvasionPaths},~\cite[Section~3.3]{adams2021efficient}, and~\cite{distributed}.

\subsection{Uncovered Regions are not Homotopy Equivalent}\label{subsection:uncovered-not-homotopy-equivalent}

Though the \emph{covered} regions of $A$ and $B$ are time-varying homotopy equivalent, the \emph{uncovered} regions are not.
Indeed, the uncovered region for A has an evasion path, whereas the uncovered region for B does not.
And, as the following theorem shows, the complement of a time-varying sensor network that has an evasion path cannot be time-varying homotopy equivalent to the complement of one that does not.

First, we recall some notation.
If a sensor network has covered region $X\subseteq D\times I$ in spacetime, then the corresponding uncovered region is $X^c = (D \times I) \setminus X$.
An \emph{evasion path} is a time-varying map $I \to X^c\subseteq D\times I$.

\begin{theorem}
\label{thm:evasion-path}
Let $X,Y\subseteq D\times I$ be two time-varying covered regions.
If $X^c$ has an evasion path $I \to X^c$ and $Y^c$ does not, then the two uncovered regions $X^c$ and $Y^c$ are not time-varying homotopy equivalent.
\end{theorem}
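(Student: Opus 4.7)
The plan is to push the evasion path forward through one half of the time-varying homotopy equivalence, obtaining an evasion path in $Y^c$ and contradicting the hypothesis. Assume for contradiction that $X^c \simeq_{tv} Y^c$, and let $f\colon X^c \to Y^c$ and $g\colon Y^c \to X^c$ be time-varying maps with $g\circ f \simeq_{tv} \id_{X^c}$ and $f\circ g \simeq_{tv} \id_{Y^c}$. We do not actually need the homotopy inverse $g$; the existence of the single time-varying map $f$ will suffice.

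First I would observe that the interval $I$ is itself a time-varying space in a canonical way, equipped with the identity map $\id\colon I \to I$. Under this structure the fiber at time $t$ is the singleton $\{t\}$, so a time-varying map $I \to X^c$ is exactly a continuous map $p\colon I \to X^c\subseteq D\times I$ satisfying $p(t)\in X^c_t$ for all $t$. This matches the definition of an evasion path given just before the theorem statement.

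The key step is that compositions of time-varying maps are time-varying maps. Given an evasion path $p\colon I \to X^c$, form the composition $f\circ p\colon I \to Y^c$. It is continuous as a composition of continuous maps, and since $p(t)\in X^c_t$ and $f(X^c_t)\subseteq Y^c_t$ by Definition~\ref{def: tv map}, we have $(f\circ p)(t)\in Y^c_t$ for every $t\in I$. Therefore $f\circ p$ is a time-varying map $I \to Y^c$, i.e., an evasion path in $Y^c$. This contradicts the assumption that $Y^c$ admits no evasion path, completing the proof.

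There is essentially no obstacle in this argument — it is a formal consequence of the definitions. The only subtlety worth highlighting is recognizing that $I$ itself carries a natural time-varying structure, so that the notion of evasion path is literally an instance of the notion of time-varying map; once this observation is in place, functoriality of composition does all the work. As a bonus remark, I would note that the argument shows something slightly stronger: the existence of \emph{any} time-varying map from $X^c$ to $Y^c$ already transports evasion paths from the first space to the second, so a full homotopy equivalence is more than is needed for this direction.
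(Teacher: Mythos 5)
Your proposal is correct and takes essentially the same route as the paper: both extract the time-varying map $f\colon X^c\to Y^c$ from the assumed equivalence, compose it with the evasion path, and observe that the composite is an evasion path in $Y^c$, a contradiction. Your added remark that only the single map $f$ (not the full homotopy equivalence) is needed is also implicit in the paper's closing sentence.
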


\begin {proof}
There exists an evasion path for $X^c$, i.e.\ there exists a continuous time-varying map $\iota \colon I \to X^c$.
By Definition~\ref{def:tv-he}, we know that if the spaces $X^c$ and $Y^c$ are time-varying homotopy equivalent, then we have a continuous time-varying map $f\colon X^c\to Y^c$.
Note then that $f\circ\iota\colon I \to Y^c$ is then an evasion path in $Y^c$.
This shows that if the uncovered regions $X^c$ and $Y^c$ are time-varying homotopy equivalent, and if $X^c$ has an evasion path, then so does $Y^c$.
\end{proof}

\begin{corollary}
If one has two sensor networks $A$ and $B$, in which $A$ has an evasion path in the complement but $B$ does not, then the uncovered regions for $A$ and $B$ cannot be time-varying homotopy equivalent.
\end{corollary}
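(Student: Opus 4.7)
The plan is to observe that this corollary is essentially an immediate restatement of Theorem~\ref{thm:evasion-path} applied to the specific setting of the two named sensor networks $A$ and $B$. First I would fix notation: denote the covered regions of the two networks by $X, Y \subseteq D \times I$ respectively, so that the corresponding uncovered regions are $X^c = (D\times I)\setminus X$ and $Y^c = (D\times I)\setminus Y$. The hypothesis that $A$ has an evasion path but $B$ does not translates precisely to: there exists a time-varying map $\iota\colon I \to X^c$, but no time-varying map $I \to Y^c$.

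Next, I would simply invoke Theorem~\ref{thm:evasion-path}, which tells us that under exactly these hypotheses the two uncovered regions cannot be time-varying homotopy equivalent. No further work is required, since Theorem~\ref{thm:evasion-path} has already established the general statement: whenever $X^c$ admits an evasion path and $Y^c$ does not, $X^c \not\simeq_{tv} Y^c$.

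There is essentially no obstacle here, since the corollary is just a naming-specialization of the preceding theorem. The only minor subtlety worth noting (to keep the exposition self-contained) is to remind the reader why the argument really does apply: a time-varying homotopy equivalence $f\colon X^c \to Y^c$ is, in particular, a time-varying map, and composing $f$ with the evasion path $\iota$ yields a time-varying map $f\circ \iota\colon I \to Y^c$, i.e.\ an evasion path in $Y^c$, contradicting the hypothesis that $B$ has none. Thus the corollary follows immediately, and I would write it as a one-line proof referencing Theorem~\ref{thm:evasion-path}.
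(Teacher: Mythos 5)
Your proposal is correct and matches the paper's intent exactly: the corollary is stated without proof precisely because it is an immediate specialization of Theorem~\ref{thm:evasion-path}, whose proof (composing a time-varying homotopy equivalence with the evasion path) you accurately recapitulate.
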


\section{Conclusion}
\label{sec:conclusion}

We describe notions of equivalence for time-varying topological spaces, and we use this machinery to compare two examples of spaces (A and B) arising from mobile sensor networks.
We show that the topological spaces arising from the covered regions of two mobile sensor networks are not time-varying homeomorphic, even though they are time-varying homotopy equivalent.

We prove that the sensor networks $A$ and $B$ are not time-varying homeomorphic by showing that there exist times $t_1$ and $t_2$ and inclusion maps between the fibers of $A$ and $B$ at those times that do nor commute with possible homeomorphisms.
To make this rigorous, we demonstrate the lack of homeomorphism by showing that there is no way to induce consistent maps on the fundamental groups of these fibers.

By the nerve lemma, the fibers of our spaces are homotopy equivalent to their nerves (\v{C}ech complexes).
We then show that $A'_t$ and $B'_t$, the nerve complexes of $A$ and $B$, are homeomorphic for all $t \in I$.
Using this equivalence of the nerve complexes, we prove that the covered regions \emph{are} time-varying homotopy equivalent, via the chain of equivalences $A \simeq_{tv} A' \cong_{tv} B' \simeq_{tv} B$.
The equivalence in the middle relies heavily on the fact that portions of the nerve complexes of the spaces are one-dimensional, even though the fibers they represent are two-dimensional.
Roughly speaking, the two-dimensional fibers carry extra ``ribbon graph structure'' that distinguish $A$ and $B$ up to fiberwise homeomorphism.
Ribbon graphs are a potential area for further exploration.
Lastly, we show that the uncovered regions are \emph{not} time-varying homotopy equivalent, because the uncovered region of one network has an evasion path while the other does not.

Future work might address the following open questions.

\begin{question}
Suppose $X$ and $Y$ are the covered regions for two time-varying sensor networks and that $X$ and $Y$ are time-varying homotopy equivalent; i.e., $X\simeq_{tv} Y$.
Suppose there is an evasion path in the complement of $X$, but no evasion path in the complement of $Y$.
Then, is it necessarily the case that $X$ and $Y$ are not time-varying homeomorphic?
\end{question}

\begin{question}
Can weak rotation data be incorporated with the time-varying \v{C}ech complex, instead of the alpha complex (which is more difficult to measure) to determine if-and-only-if an evasion path exists?
See the Open Question on Pages~29 and~109 of~\cite{MyThesis} and~\cite{EvasionPaths}, respectively, for precise statements of this question, which remains open a decade later.
\end{question}

\bibliographystyle{plain}
\bibliography{TimeVaryingSpacesAndMobileSensorNetworks.bib}

\begin{thebibliography}{10}

\bibitem{MyThesis}
Henry Adams.
\newblock {\em Evasion paths in mobile sensor networks}.
\newblock PhD thesis, Stanford University, 2013.

\bibitem{EvasionPathsVideo}
Henry Adams and Gunnar Carlsson.
\newblock The sensor network evasion problem: {D}ependence on the embedding.
\newblock YouTube video \url{https://youtu.be/K5gA7-EezXU?si=weABzTJQaqPWTtJL}.

\bibitem{EvasionPaths}
Henry Adams and Gunnar Carlsson.
\newblock Evasion paths in mobile sensor networks.
\newblock {\em International Journal of Robotics Research}, 34(1):90--104,
  2015.

\bibitem{adams2021efficient}
Henry Adams, Deepjyoti Ghosh, Clark Mask, William Ott, and Kyle Williams.
\newblock Efficient evader detection in mobile sensor networks.
\newblock {\em arXiv preprint arXiv:2101.09813}, 2021.

\bibitem{armstrong2013basic}
Mark~Anthony Armstrong.
\newblock {\em Basic topology}.
\newblock Springer Science \& Business Media, 2013.

\bibitem{Borsuk1948}
Karol Borsuk.
\newblock On the imbedding of systems of compacta in simplicial complexes.
\newblock {\em Fundamenta Mathematicae}, 35(1):217--234, 1948.

\bibitem{carlsson2020space}
Gunnar Carlsson and Benjamin Filippenko.
\newblock The space of sections of a smooth function.
\newblock {\em arXiv preprint arXiv:2006.12023}, 2020.

\bibitem{cavanna17when}
Nicholas~J. Cavanna, Kirk~P. Gardner, and Donald~R. Sheehy.
\newblock When and why the topological coverage criterion works.
\newblock In {\em Proceedings of the ACM-SIAM Symposium on Discrete
  Algorithms}, pages 2679--2690, 2017.

\bibitem{chintakunta2014distributed}
Harish Chintakunta and Hamid Krim.
\newblock Distributed localization of coverage holes using topological
  persistence.
\newblock {\em IEEE Transactions on Signal Processing}, 62(10):2531--2541,
  2014.

\bibitem{crabb2012fibrewise}
Michael~Charles Crabb and Ioan~Mackenzie James.
\newblock {\em Fibrewise homotopy theory}.
\newblock Springer Science \& Business Media, 2012.

\bibitem{Coordinate-free}
Vin de~Silva and Robert Ghrist.
\newblock Coordinate-free coverage in sensor networks with controlled
  boundaries via homology.
\newblock {\em The International Journal of Robotics Research},
  25(12):1205--1222, 2006.

\bibitem{de2007coverage}
Vin de~Silva and Robert Ghrist.
\newblock Coverage in sensor networks via persistent homology.
\newblock {\em Algebraic \& Geometric Topology}, 7(1):339--358, 2007.

\bibitem{EdelsbrunnerHarer}
Herbert Edelsbrunner and John~L Harer.
\newblock {\em Computational Topology: An Introduction}.
\newblock American Mathematical Society, Providence, 2010.

\bibitem{gamble2015coordinate}
Jennifer Gamble, Harish Chintakunta, and Hamid Krim.
\newblock Coordinate-free quantification of coverage in dynamic sensor
  networks.
\newblock {\em Signal Processing}, 114:1--18, 2015.

\bibitem{gordon1989knots}
C~McA Gordon and John Luecke.
\newblock Knots are determined by their complements.
\newblock {\em Journal of the American Mathematical Society}, 2(2):371--415,
  1989.

\bibitem{Hatcher}
Allen Hatcher.
\newblock {\em Algebraic Topology}.
\newblock Cambridge University Press, Cambridge, 2002.

\bibitem{Igusa}
Kiyoshi Igusa.
\newblock {\em Higher Franz-Reidemeister Torsion}, volume~31.
\newblock American Mathematical Society, 2002.

\bibitem{Kerber2013}
Michael Kerber and Herbert Edelsbrunner.
\newblock 3d kinetic alpha complexes and their implementation.
\newblock In {\em 2013 Proceedings of the Fifteenth Workshop on Algorithm
  Engineering and Experiments ({ALENEX})}, pages 70--77. Society for Industrial
  and Applied Mathematics, January 2013.

\bibitem{distributed}
Denis Khryashchev, Jie Chu, Mikael Vejdemo-Johansson, and Ping Ji.
\newblock A distributed approach to the evasion problem.
\newblock {\em Algorithms}, 13(139):1--13, 2020.

\bibitem{MoharThomassen}
Bojan Mohar and Carsten Thomassen.
\newblock {\em Graphs on Surfaces}, volume~2.
\newblock Johns Hopkins University Press, Baltimore, 2001.

\end{thebibliography}

\appendix

\end{document}